\documentclass[%12pt,%letterpaper, 
reqno]{amsart}%{article}
\usepackage{latexsym,amsmath,amssymb,amscd}
\usepackage[all]{xy}

\title[Differentiable vectors for contragredient representations]
{On the differentiable vectors for contragredient representations}
\author{Ingrid Belti\c t\u a}
%\footnote{Institute of Mathematics ``Simion Stoilow'' of the Romanian Academy,
%P.O. Box 1-764, Bucharest, Romania. Email: \texttt{Ingrid.Beltita@imar.ro}}, 
\author{Daniel Belti\c t\u a}
%\footnote{Institute of Mathematics ``Simion Stoilow'' of the Romanian Academy,
%P.O. Box 1-764, Bucharest, Romania. Email: \texttt{Daniel.Beltita@imar.ro}}, 
\address{Institute of Mathematics ``Simion Stoilow'' of the Romanian Academy,
P.O. Box 1-764, Bucharest, Romania}
\email{Ingrid.Beltita@imar.ro, ingrid.beltita@gmail.com}
\email{Daniel.Beltita@imar.ro, beltita@gmail.com}
%\dedicatory{File name: {\bfi BB12\_arxiv.tex}}
\keywords{contragredient representation; differentiable vector; commutator}
\subjclass[2010]{Primary 22E45; Secondary 47G30, 47B10}
\date{June 22, 2013}
%%%%%%%%%%%%%%%%%%%%%%%%%%%%%%

\begin{document}

\begin{abstract}
We establish a few simple results on contragredient representations of Lie groups, 
with a view toward applications to the abstract characterization of 
some spaces of pseudo-differential operators. 
In particular, this method provides an abstract approach to 
J.~Nourrigat's recent description of the norm closure of the 
pseudo-differential operators of order zero. 
\end{abstract}

%\maketitle

\makeatletter
\@addtoreset{figure}{section}
\def\thefigure{\thesection.\@arabic\c@figure}
\def\fps@figure{h,t}
\@addtoreset{table}{bsection}

\def\thetable{\thesection.\@arabic\c@table}
\def\fps@table{h, t}
\@addtoreset{equation}{section}
\def\theequation{%\thesection.
\arabic{equation}}
\makeatother

\newcommand{\bfi}{\bfseries\itshape}

\newtheorem{theorem}{Theorem}
\newtheorem{corollary}[theorem]{Corollary}
\newtheorem{definition}[theorem]{Definition}
\newtheorem{example}[theorem]{Example}
\newtheorem{lemma}[theorem]{Lemma}
\newtheorem{notation}[theorem]{Notation}
\newtheorem{problem}[theorem]{Problem}
\newtheorem{proposition}[theorem]{Proposition}
\newtheorem{remark}[theorem]{Remark}

\numberwithin{theorem}{section}
\numberwithin{equation}{section}

\renewcommand{\1}{{\bf 1}}

\newcommand{\Ad}{{\rm Ad}}
\newcommand{\ad}{{\rm ad}}
\newcommand{\Ci}{{\mathcal C}^\infty}

\newcommand{\Aut}{{\rm Aut}}
\newcommand{\ph}{\text{\bf P}}
\newcommand{\de}{{\rm d}}
\newcommand{\ee}{{\rm e}}
\newcommand{\ie}{{\rm i}}
\renewcommand{\Im}{{\rm Im}}
\newcommand{\Ker}{{\rm Ker}\,}
\newcommand{\Lie}{\textbf{L}}
\newcommand{\Op}{{\rm Op}}
\newcommand{\Ran}{{\rm Ran}\,}

\newcommand{\Tr}{{\rm Tr}\,}

\newcommand{\CC}{{\mathbb C}}
\newcommand{\HH}{{\mathbb H}}
\newcommand{\NN}{{\mathbb N}}
\newcommand{\RR}{{\mathbb R}}

\newcommand{\Bc}{{\mathcal B}}
\newcommand{\Cc}{{\mathcal C}}
\newcommand{\Dc}{{\mathcal D}}
\newcommand{\Hc}{{\mathcal H}}
\newcommand{\Kc}{{\mathcal K}}
\newcommand{\Xc}{{\mathcal X}}
\newcommand{\Yc}{{\mathcal Y}}
\newcommand{\Ag}{{\mathfrak A}}

\renewcommand{\gg}{{\mathfrak g}}
\newcommand{\Sg}{{\mathfrak S}}
\newcommand{\zg}{{\mathfrak z}}

%\pagestyle{myheadings}
%\markboth{}{}

\maketitle

%\tableofcontents

\section{Introduction}

In this note we study the abstract characterization of 
some spaces of pseudo-differential operators by using 
a few simple results on the contragredients of Banach space representations of Lie groups. 
The applicability of the method based on a contragredient representation is 
due to the fact that such a representation may be discontinuous even if the original representation is continuous; 
see for instance the representation \eqref{appl_eq1} below, which is discontinuous if $r=\infty$. 
In particular, we provide an abstract approach to 
J.~Nourrigat's recent description \cite{No12} of the norm closure of the 
pseudo-differential operators of order zero 
(see Example~\ref{E1} below)
and we also bring additional information on some results from the earlier literature. 

\subsection*{Preliminaries}
For any complex Banach space $\Yc$ we denote by $\Yc^*$ its topological dual  
and by $\Bc(\Yc)^\times$ the group of invertible elements 
in the Banach algebra $\Bc(\Yc)$ of all bounded linear operators.  
If $G$ is any group, then a Banach space representation of $G$ 
is a 
group homomorphism $\pi\colon G\to\Bc(\Yc_\pi)^\times$, where $\Yc_\pi$ is a complex Banach space. 
The contragredient representation of $\pi$ is the representation 
$$\pi^*\colon G\to\Bc(\Yc_\pi^*)^\times, \quad \pi^*(g):=\pi(g^{-1})^*,$$ 
so that $\Yc_{\pi^*}:=\Yc_\pi^*$. 
If $\sup\limits_{g\in G}\Vert\pi(g)\Vert<\infty$, then we say that $\pi$ is uniformly bounded, 
and in this case also $\pi^*$ is uniformly bounded. 

Now assume that $G$ is a topological group and for 
the uniformly bounded representation $\pi\colon G\to\Bc(\Yc_\pi)^\times$ 
define $\Yc_{\pi_0}:=\{x\in\Yc_\pi\mid\pi(\cdot)x\in\Cc(G,\Yc_\pi)\}$, 
where $\Cc$ indicates the space of continuous mappings. 
Then $\Yc_{\pi_0}$ is a closed linear subspace of $\Yc$ since $\pi$ is uniformly bounded, 
and moreover $\Yc_{\pi_0}$ is invariant under $\pi$. 
Hence we obtain a strongly continuous representation 
$\pi_0\colon G\to\Bc(\Yc_{\pi_0})$, $\pi_0(g):=\pi(g)\vert_{\Yc_{\pi_0}}$. 
By using this construction for the contragredient representation, 
we define 
$$\Yc_{\pi^*_0}:=\{\xi\in\Yc_\pi^*\mid\pi^*(\cdot)\xi\in\Cc(G,\Yc_\pi^*)\}
=\{\xi\in\Yc_\pi^*\mid\lim_{g\to\1}\Vert\pi^*(g)\xi-\xi\Vert=0\}$$
and 
$$\pi^*_0\colon G\to\Bc(\Yc_{\pi^*_0})^\times, \quad\pi^*_0(g):=\pi^*(g)\vert_{\Yc_{\pi^*_0}}.$$
If moreover $G$ is a Lie group, then we also define 
$\Yc_{\pi}^k:=\{y\in\Yc\mid\pi(\cdot)y\in\Cc^k(G,\Yc)\}$ for every integer $k\ge0$, 
so in particular $\Yc_{\pi}^0=\Yc_{\pi_0}$. 
Moreover, if the representation~$\pi$ is strongly continuous, that is, $\Yc=\Yc_{\pi_0}$, 
then for every $k\ge 1$ and every basis $\{X_1,\dots,X_m\}$ in the Lie algebra $\gg$ of~$G$ 
we have 
\begin{equation}\label{pre_eq1}
\Yc_{\pi}^k=\bigcap_{1\le j_1,\dots,j_k\le m}\Dc(\de\pi(X_{j_1})\cdots\de\pi(X_{j_k}))
\end{equation}
(see for instance \cite[Th. 9.4]{Ne10}).  
Here and in what follows we denote by $\Dc(T)$ the domain of any unbounded operator~$T$. 

\section{The main abstract results}

The following theorem can be regarded as a version of \eqref{pre_eq1} 
for some discontinuous representations of Lie groups, 
namely for the contragredient of any uniformly bounded and strongly continuous representation.  

\begin{theorem}\label{main}
Let $G$ be a Lie group with a strongly continuous representation $\pi\colon G\to\Bc(\Yc)$ 
which is also assumed to be uniformly bounded. 
If $\{X_1,\dots,X_m\}$ is any basis in the Lie algebra $\gg$ of~$G$,   
then  
$$\Yc_{\pi^*}^k
\subseteq
\bigcap_{1\le j_1,\dots,j_k\le m}\Dc(\de\pi(X_{j_1})^*\cdots\de\pi(X_{j_k})^*)
\subseteq
\Yc_{\pi^*}^{k-1}
$$
for every integer $k\ge 1$, 
and the above inclusions could simultaneously be strict. 
\end{theorem}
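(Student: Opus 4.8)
The plan is to prove the two inclusions separately, the second one carrying the real weight and accounting for the loss of one derivative. Throughout I will use that for each $X\in\gg$ the operator $\de\pi(X)$ generates the strongly continuous one-parameter group $U_X(t):=\pi(\exp(tX))$ on $\Yc$, so that its Banach adjoint $\de\pi(X)^*$ generates the (merely weak-$*$ continuous) adjoint group $U_X(t)^*$ on $\Yc^*$, while $\pi^*(\exp(tX))=U_X(-t)^*$. A vector $\xi\in\Yc^*$ therefore lies in $\Dc(\de\pi(X)^*)$ precisely when $t\mapsto\pi^*(\exp(tX))\xi$ is weak-$*$ differentiable at $0$, with derivative $-\de\pi(X)^*\xi$.

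For the first inclusion, if $\xi\in\Yc_{\pi^*}^k$ then $g\mapsto\pi^*(g)\xi$ is norm-$C^k$; differentiating along $X_{j_1}$ at $\1$ produces a norm limit, which is a fortiori a weak-$*$ limit, so $\xi\in\Dc(\de\pi(X_{j_1})^*)$ and the resulting vector again has a norm-$C^{k-1}$ orbit map. Iterating $k$ times, and noting that $\de\pi(X_{j_1})^*\cdots\de\pi(X_{j_k})^*$ agrees up to sign with the corresponding composition of generators of $\pi^*$, one lands in the intersection of the iterated domains. This is essentially the easy half of \eqref{pre_eq1} and uses no strong continuity of $\pi^*$.

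The crux is the second inclusion, and its heart is the case $k=1$: I claim that $\xi\in\bigcap_j\Dc(\de\pi(X_j)^*)$ already forces $g\mapsto\pi^*(g)\xi$ to be norm-continuous, that is, $\xi\in\Yc_{\pi^*}^0$. The key computation is a weak-$*$ fundamental theorem of calculus: for $\xi\in\Dc(\de\pi(X_j)^*)$ the scalar function $t\mapsto\langle U_{X_j}(t)^*\xi,y\rangle$ is $C^1$ for every $y\in\Yc$ with derivative $\langle U_{X_j}(t)^*\de\pi(X_j)^*\xi,y\rangle$, whence
$$\Vert\pi^*(\exp(tX_j))\xi-\xi\Vert\le |t|\,\Big(\sup_{g\in G}\Vert\pi^*(g)\Vert\Big)\,\Vert\de\pi(X_j)^*\xi\Vert,$$
so the orbit is norm-Lipschitz along each one-parameter subgroup. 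Writing a general $g$ near $\1$ in exponential coordinates of the second kind $g=\exp(t_1X_1)\cdots\exp(t_mX_m)$ and telescoping, I bound $\Vert\pi^*(g)\xi-\xi\Vert$ by a sum of such increments; the intermediate vectors $\pi^*(\exp(t_{i+1}X_{i+1})\cdots\exp(t_mX_m))\xi$ stay in $\bigcap_j\Dc(\de\pi(X_j)^*)$ with locally uniform norm bounds, because this intersection is $\Ad$-invariant via $\de\pi(X)^*\pi^*(h)=\pi^*(h)\de\pi(\Ad(h^{-1})X)^*$ together with the uniform boundedness of $\pi^*$. Hence $\Vert\pi^*(g)\xi-\xi\Vert\to0$ as $g\to\1$.

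For general $k$ I then induct. If $\xi$ lies in all $k$-fold domains, then each vector $\de\pi(X_{j_1})^*\cdots\de\pi(X_{j_i})^*\xi$ with $i\le k-1$ lies in $\bigcap_j\Dc(\de\pi(X_j)^*)$, so by the case $k=1$ its orbit map is norm-continuous. These are exactly the candidate partial derivatives of $g\mapsto\pi^*(g)\xi$ up to order $k-1$; since each candidate is now a continuous vector, the weak-$*$ fundamental theorem of calculus upgrades to a norm (Bochner) one, and a standard climb from order $0$ to order $k-1$ identifies them as genuine norm derivatives, giving $\xi\in\Yc_{\pi^*}^{k-1}$. This also explains the unavoidable shift: proving norm-continuity of the top derivative consumes precisely one order of the hypothesis. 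I expect this passage from weak-$*$ to norm regularity, together with the locally uniform $\Ad$-invariant control of the intermediate vectors, to be the main technical obstacle. Finally, the strictness of both inclusions should be exhibited on a concrete example such as the representation in \eqref{appl_eq1} with $r=\infty$, whose contragredient is genuinely discontinuous.
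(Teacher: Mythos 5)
Your handling of the two inclusions is essentially the paper's own route, with the citations replaced by direct arguments. The heart of the matter, namely that $\bigcap_{j}\Dc(\de\pi(X_j)^*)\subseteq\Yc_{\pi^*}^{0}$, is the paper's Lemma~\ref{L1}; where the paper quotes van Neerven for the inclusion $\Dc(\de\pi(X)^*)\subseteq\Yc_{\pi^*\circ\gamma_X}$, you rederive it via the weak-$*$ fundamental theorem of calculus, which is correct (modulo the small point that the scalar identity is first checked for $y\in\Dc(\de\pi(X))$ and then extended by density and uniform boundedness). One genuine difference: you telescope in the form $\sum_i(\pi^*(g_i)-I)\,\pi^*(g_{i+1}\cdots g_m)\xi$, which forces you to verify that the intermediate vectors remain in $\bigcap_j\Dc(\de\pi(X_j)^*)$ with locally uniform bounds via $\Ad$-invariance. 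The paper telescopes the other way, as $\sum_i\pi^*(g_1\cdots g_{i-1})\bigl(\pi^*(g_i)\xi-\xi\bigr)$, so the increment is always taken at $\xi$ itself and the outer factor is simply bounded by $M=\sup_g\Vert\pi(g)\Vert$; your extra lemma is correct but unnecessary. Likewise, your ``standard climb'' from weak-$*$ to norm derivatives for general $k$ is precisely Poulsen's lemma together with \eqref{pre_eq1} applied to the strongly continuous part $\pi^*_0$, which is what the paper invokes; if you intend to prove rather than cite this, note that the passage from iterated one-parameter derivatives to joint $C^{k-1}$ regularity on $G$ is exactly the nontrivial content of \eqref{pre_eq1} and needs an argument.

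The genuine gap is the final clause of the theorem. The assertion that the two inclusions can be \emph{simultaneously strict} is part of the statement, and pointing at ``a concrete example such as the representation in \eqref{appl_eq1} with $r=\infty$'' is not a proof. The paper actually produces the example: $G=\RR$, $\Yc=\Sg_1(L^2(\RR))$, $\pi(t)A=\rho(t)A\rho(t)^{-1}$ with $\rho$ the translation group, so that $\Yc^*\simeq\Bc(L^2(\RR))$; for $\phi\in L^\infty(\RR)$ one has $\phi(Q)\in\Yc_{\pi^*}^k$ if and only if $\phi$ has $k$ bounded derivatives with uniformly continuous $k$-th derivative, whereas $\phi(Q)\in\Cc^1(\pi^*)$ if and only if $[\phi(Q),P]$ is bounded, i.e.\ $\phi$ is bounded and globally Lipschitz. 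A Lipschitz, non-$C^1$ function then witnesses strictness of the first inclusion, and a bounded uniformly continuous, non-Lipschitz function witnesses strictness of the second. Some computation of this kind (identifying both $\Yc_{\pi^*}^k$ and $\Cc^1(\pi^*)$ in a concrete model) is indispensable, and your proposal contains none of it, so the theorem as stated is not fully proved.
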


For proving the theorem it will be convenient to use the notation 
$$\Cc^k(\pi^*):=\bigcap_{1\le j_1,\dots,j_k\le m}\Dc(\de\pi(X_{j_1})^*\cdots\de\pi(X_{j_k})^*)$$
for an arbitrary integer $k\ge 1$. 
It is clear that $\Cc^1(\pi^*)\supseteq\Cc^2(\pi^*)\supseteq\cdots$. 

The proof will be based on the following auxiliary result, 
which should be thought of as an embedding lemma on abstract Sobolev spaces. 

\begin{lemma}\label{L1}
We have $\Cc^1(\pi^*)\subseteq\Yc_{\pi^*_0}$. 
\end{lemma}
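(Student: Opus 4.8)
\emph{Proof sketch.} The plan is to reduce the continuity of $\pi^*(\cdot)\xi$ at $\1$ to continuity along each of the $m$ one-parameter subgroups $t\mapsto\exp(tX_j)$, and then to establish the latter by a Sobolev-type estimate coming from the fundamental theorem of calculus for the weak-$*$ continuous group $\pi^*$. Throughout I would fix $\xi\in\Cc^1(\pi^*)$, so that $\xi\in\Dc(\de\pi(X_j)^*)$ for every $j$, and set $C:=\sup_{g\in G}\Vert\pi^*(g)\Vert<\infty$.

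For each $j$ the one-parameter group $t\mapsto\pi^*(\exp(tX_j))$ on $\Yc_\pi^*$ is the adjoint of the strongly continuous group $t\mapsto\pi(\exp(-tX_j))$ on $\Yc$, whose generator is $-\de\pi(X_j)$; hence its weak-$*$ generator is $-\de\pi(X_j)^*$, with domain $\Dc(\de\pi(X_j)^*)$. The first key step is the single-variable estimate
\begin{equation}\label{prop_single}
\Vert\pi^*(\exp(tX_j))\xi-\xi\Vert\le C\,\vert t\vert\,\Vert\de\pi(X_j)^*\xi\Vert.
\end{equation}
To obtain it, I would first check that $\pi^*(\exp(sX_j))$ leaves $\Dc(\de\pi(X_j)^*)$ invariant and commutes there with $\de\pi(X_j)^*$, which follows by taking adjoints in the identity $\de\pi(X_j)\pi(\exp(-sX_j))=\pi(\exp(-sX_j))\de\pi(X_j)$ valid on $\Dc(\de\pi(X_j))$. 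The fundamental theorem of calculus for the weak-$*$ continuous group then yields, as a weak-$*$ integral,
\begin{equation}\label{prop_ftc}
\pi^*(\exp(tX_j))\xi-\xi=-\int_0^t\pi^*(\exp(sX_j))\de\pi(X_j)^*\xi\,\de s,
\end{equation}
and since a weak-$*$ integral has norm at most the integral of the norms of its integrand, \eqref{prop_ftc} gives \eqref{prop_single}.

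The second step combines these estimates through coordinates of the second kind. Writing $g=\exp(t_1X_1)\cdots\exp(t_mX_m)$ for $g$ in a neighborhood of $\1$, a telescoping decomposition together with the uniform bound $\Vert\pi^*(g)\Vert\le C$ gives
\begin{equation}\label{prop_tele}
\Vert\pi^*(g)\xi-\xi\Vert\le\sum_{k=1}^m C^{k-1}\Vert\pi^*(\exp(t_kX_k))\xi-\xi\Vert.
\end{equation}
As $g\to\1$ all the parameters $t_k\to0$, so applying \eqref{prop_single} to each summand of \eqref{prop_tele} forces $\Vert\pi^*(g)\xi-\xi\Vert\to0$; that is, $\xi\in\Yc_{\pi^*_0}$, as required.

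I expect the main obstacle to be the careful handling of \eqref{prop_ftc}: because $\pi^*$ need not be strongly continuous, the map $s\mapsto\pi^*(\exp(sX_j))\de\pi(X_j)^*\xi$ is only weak-$*$ continuous, so \eqref{prop_ftc} must be read as a weak-$*$ rather than a Bochner integral. What makes the argument go through is, on the one hand, the invariance of $\Dc(\de\pi(X_j)^*)$ under the adjoint group and, on the other hand, the fact that a bounded weak-$*$ continuous integrand $F$ still satisfies the norm estimate $\Vert\int_0^t F(s)\,\de s\Vert\le\int_0^t\Vert F(s)\Vert\,\de s$; the telescoping in \eqref{prop_tele} is then routine. \qed
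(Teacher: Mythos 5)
Your proposal is correct and follows essentially the same route as the paper: reduce continuity at $\1$ to continuity along the one-parameter subgroups $t\mapsto\exp_G(tX_j)$ via a telescoping estimate in coordinates of the second kind. The only difference is that the single-subgroup inclusion $\Dc(\de\pi(X_j)^*)\subseteq\Yc_{\pi^*\circ\gamma_{X_j}}$, which you establish by hand through the weak-$*$ fundamental theorem of calculus (even obtaining a Lipschitz bound), is exactly the content of the result of van Neerven that the paper cites at this point, so your argument is a self-contained version of the same proof.
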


\begin{proof}
For every $X\in\gg$ let us denote $\gamma_X\colon\RR\to G$, $\gamma_X(t):=\exp_G(tX)$. 
It follows by \cite[Th. 1.3.1]{vNe92} that 
\begin{equation}\label{L1_proof_eq1}
\Dc(\de\pi(X)^*)\subseteq\Yc_{\pi^*\circ\gamma_X}=\{\xi\in\Yc^*\mid\pi^*(\gamma_X(\cdot))\xi\in\Cc(\RR,\Yc^*)\}
\end{equation}
for arbitrary $X\in\gg$. 
On the other hand, we have 
\begin{equation}\label{L1_proof_eq2}
\Yc_{\pi^*}=\bigcap_{j=1}^m\Yc_{\pi^*\circ\gamma_{X_j}}
\end{equation}
since the inclusion $\subseteq$ is obvious while the inclusion $\supseteq$ 
holds true for the following reason. 
For all $t_1,\dots,t_m\in\RR$ and $\xi\in\Yc^*$ we have 
$$\begin{aligned}
\Vert\pi^*(\gamma_{X_1}(t_1) & \cdots\gamma_{X_m}(t_m))\xi-\xi\Vert \\
& \le \sum_{j=1}^m \Vert\pi^*(\gamma_{X_1}(t_1)\cdots\gamma_{X_{j-1}}(t_{j-1}))(\pi^*(\gamma_{X_j}(t_j))\xi-\xi)\Vert \\
& \le M \sum_{j=1}^m \Vert\pi^*(\gamma_{X_j}(t_j))\xi-\xi\Vert 
\end{aligned}$$
where $M:=\sup\limits_{g\in G}\Vert\pi(g)\Vert$. 
%by using an argument from the proof of \cite[Prop. 1.1]{Po72}. 
Since $\{X_1,\dots,X_m\}$ is a basis in the Lie algebra $\gg$, 
it follows that the mapping $\RR^m\to G$, $(t_1,\dots,t_m)\mapsto \gamma_{X_1}(t_1)\cdots\gamma_{X_m}(t_m)$, 
is a local diffeomorphism at $0\in\RR^m$, and then the above estimate shows that 
for every $\xi\in \bigcap\limits_{j=1}^m\Yc_{\pi^*\circ\gamma_{X_j}}$ 
we have $\lim\limits_{g\to\1}\Vert\pi^*(g)\xi-\xi\Vert=0$, hence $\xi\in\Yc_{\pi^*}$. 
This completes the proof of \eqref{L1_proof_eq2}.

Now, 
since $\Dc(\de\pi(X_1)^*)\cap\cdots\cap\Dc(\de\pi(X_m)^*)=\Cc^1(\pi^*)$, 
the assertion follows by \eqref{L1_proof_eq1} and \eqref{L1_proof_eq2}. 
\end{proof}

\begin{proof}[Proof of Theorem~\ref{main}]
By using Lemma~\ref{L1} and \cite[Lemma 1.1]{Po72} we obtain 
$$\Cc^k(\pi^*)\subseteq\bigcap_{1\le j_1,\dots,j_{k-1}\le m}\Dc(\de\pi_0^*(X_{j_1})\cdots\de\pi_0^*(X_{j_{k-1}}))
=\Yc_{\pi^*}^{k-1}$$
where the latter equality follows by using \eqref{pre_eq1}  
for the strongly continuous representation~$\pi^*_0$. 
The inclusion  $\Yc_{\pi^*}^k\subseteq \Cc^k(\pi^*)$ can be easily proved by using \eqref{pre_eq1}
and the fact that for every $X\in \gg$ we have 
$\Dc(\de \pi_0^*(X)) \subset \Dc(\de\pi(X)^*)$ and 
$ \de\pi(X)^*\vert_{\Dc(\de \pi_0^*(X))}= \de \pi_0^*(X)$.

We now prove by example that the inclusion in the statement can be strict for $k=1$. 
To this end let $G=\RR$, $\Yc$ be the space of trace-class operators on $L^2(\RR)$, 
and consider the regular representation $\rho\colon\RR\to \Bc(L^2(\RR))$, $\rho(t)f=f(\cdot+t)$. 
Then define $\pi\colon\RR\to\Bc(\Yc)$, $\pi(t)A=\rho(t)A\rho(t)^{-1}$ 
and for every $\phi\in L^\infty(\RR)$ denote by $\phi(Q)$ the multiplication-by-$\phi$ operator on $L^2(\RR)$, 
so that $\phi(Q)\in\Bc(L^2(\RR))\simeq\Yc^*$. 
It was noted in \cite[Ex. 6.2.7]{ABG96} that  $\phi(Q)\in\Yc_{\pi^*}^k$ 
if and only the first $k$ derivatives of $\phi$ exist, are bounded, 
and the $k$-th derivative is also uniformly continuous on~$\RR$.  

On the other hand, if we denote by $P=-\ie \frac{\de}{\de t}$ the infinitesimal generator of $\rho$, 
then it is easily checked that $\phi(Q)\in\Cc^1(\pi^*)$ if and only if the commutator $[\phi(Q),P]$ belongs to $\Bc(L^2(\RR))$, 
hence by using also \cite[Prop. 5.1.2(b)]{ABG96} and again \cite[Ex. 6.2.7]{ABG96} 
we see that the latter commutator condition is equivalent to the fact that $\phi$ is bounded and satisfies the Lipschitz condition 
globally on~$\RR$. 
Therefore there exist $\phi, \psi \in L^\infty(\RR)$ such that $\phi(Q)\in\Cc^1(\pi^*)\setminus\Yc_{\pi^*}^1$ and
$\psi(Q) \in  \Yc_{\pi^*}^2\setminus \Cc^1(\pi^*)$.
This completes the proof. 
\end{proof}

\begin{corollary}\label{C1}
In the setting of Theorem~\ref{main}, the linear subspace 
$$\bigcap_{k\ge1}\bigcap_{1\le j_1,\dots,j_k\le m}\Dc(\de\pi(X_{j_1})^*\cdots\de\pi(X_{j_k})^*)$$
is dense in $\Yc_{\pi^*_0}$. 
\end{corollary}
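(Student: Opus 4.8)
The plan is to identify the subspace in the statement with the space of smooth vectors of the strongly continuous representation $\pi^*_0$ on $\Yc_{\pi^*_0}$, and then to invoke the classical density of smooth vectors. Write $\Sg:=\bigcap_{k\ge1}\Cc^k(\pi^*)$ for the subspace in question.

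First I would extract from Theorem~\ref{main} the two-sided bound $\Yc_{\pi^*}^k\subseteq\Cc^k(\pi^*)\subseteq\Yc_{\pi^*}^{k-1}$, valid for every integer $k\ge1$. Intersecting over all $k\ge1$ and recalling that $\Yc_{\pi^*}^0\supseteq\Yc_{\pi^*}^1\supseteq\cdots$ form a decreasing chain, both the lower and the upper bounds collapse to the same space $\bigcap_{j\ge1}\Yc_{\pi^*}^j$; a squeezing argument then gives $\Sg=\bigcap_{j\ge1}\Yc_{\pi^*}^j$.

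Next I would pass from $\pi^*$ to $\pi^*_0$. Since $\Yc_{\pi^*}^j\subseteq\Yc_{\pi^*}^0=\Yc_{\pi^*_0}$ for $j\ge1$, and the representations $\pi^*$ and $\pi^*_0$ coincide on $\Yc_{\pi^*_0}$, one has $\Yc_{\pi^*}^j=\Yc_{\pi^*_0}^j$ for every $j\ge1$. Because $\pi^*_0$ is strongly continuous, formula \eqref{pre_eq1} applies to it, so that $\bigcap_{j\ge1}\Yc_{\pi^*_0}^j$ is precisely the space of smooth vectors of $\pi^*_0$. Combining this with the previous step yields that $\Sg$ equals the smooth-vector space of $\pi^*_0$.

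Finally, $\pi^*_0$ is a strongly continuous and uniformly bounded representation of the Lie group $G$ on the Banach space $\Yc_{\pi^*_0}$, hence its space of smooth vectors is dense there by the classical G\aa rding theorem (see \cite{Ne10}); this is exactly the assertion. I expect the main obstacle to be the bookkeeping in the second and third steps: one must verify that the squeeze genuinely collapses both bounds to the same intersection, and that \eqref{pre_eq1} is legitimately invoked for $\pi^*_0$ rather than for the possibly discontinuous $\pi^*$, the point being that all the vectors involved already lie in $\Yc_{\pi^*_0}$, where the two representations agree.
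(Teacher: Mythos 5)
Your proposal is correct and follows essentially the same route as the paper: both identify the intersection $\bigcap_{k\ge1}\Cc^k(\pi^*)$ with the space of smooth vectors of the strongly continuous representation $\pi^*_0$ via the two-sided inclusions of Theorem~\ref{main}, and then conclude by the classical density of smooth vectors (G\aa rding). The paper merely compresses the squeezing and the passage from $\pi^*$ to $\pi^*_0$ into one sentence, whereas you spell out these bookkeeping steps explicitly; the details you supply are all sound.
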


\begin{proof}
It follows by Theorem~\ref{main} that this linear subspace 
is equal to the space of smooth vectors for the strongly continuous representation 
$\pi^*_0$, hence it is dense in the representation space $\Yc_{\pi^*_0}$ 
(see \cite{Ga47}). 
\end{proof}

\section{Applications}

In this section we will develop a more general version of the example used in the proof of Theorem~\ref{main}. 
Let $G$ be a Lie group with a continuous unitary representation 
$\rho\colon G\to\Bc(\Hc)$. 
If $1\le p<\infty$, denote by $\Sg_p(\Hc)$ the $p$-th Schatten ideal, 
and let $\Sg_\infty(\Hc):=\Bc(\Hc)$ and $\Sg_0(\Hc)$ be the ideal of all compact operators on $\Hc$. 
It is well known that if $p,q\in\{0\}\cup[1,\infty]$ with $\frac{1}{p}+\frac{1}{q}=1$ and 
$p\ne\infty$, then there exists an isometric linear isomorphism 
$\Sg_p(\Hc)^*\simeq\Sg_q(\Hc)$ defined by the duality pairing 
$$\langle\cdot,\cdot\rangle\colon \Sg_q(\Hc)\times\Sg_p(\Hc)\to\CC,\quad \langle Y,V\rangle:=\Tr(YV). $$
The representation $\rho^{(q)}$ can thus be regarded as the contragredient representation of 
the \emph{strongly continuous} representation $\rho^{(p)}$, 
where  
\begin{equation}\label{appl_eq1}
(\forall r\in\{0\}\cup[1,\infty])\quad \rho^{(r)}\colon G\to \Bc(\Sg_r(\Hc)), \quad \rho^{(r)}(g)Y=\rho(g)Y\rho(g)^{-1}
\end{equation}
(see also \cite{BB10}). 

Here is a consequence of the results from the previous section. 
In the special case of the Heisenberg group, 
this establishes a direct relationship between the classical characterizations of pseudo-differential operators 
from \cite{Be77} and \cite{Co79}. 

\begin{corollary}\label{C2}
In the above setting, pick any basis $\{X_1,\dots,X_m\}$ in the Lie algebra~$\gg$ of $G$.  
Assume $1\le q\le\infty$ and denote 
$$\Psi_q(\rho):=\{Y\in\Sg_p(\Hc)\mid \rho^{(q)}(\cdot)Y\in\Ci(G,\Sg_p(\Hc))\} .$$ 
Then the following assertions hold: 
\begin{enumerate}
\item\label{C2_item1} 
The linear subspace $\Psi_q(\rho)$ is precisely the set of all $Y\in\Sg_q(\Hc)$ 
for which 
$$[\de\rho(X_{j_1}),\dots,[\de\rho(X_{j_k}),Y]\dots]\in \Sg_q(\Hc)$$ 
for arbitrary $k\ge 1$ and $j_1,\dots,j_k\in\{1,\dots,m\}$. 
\item\label{C2_item2} 
If $1\le q<\infty$, then $\Psi_q(\rho)$ is dense in $\Sg_q(\Hc)$. 
If $q=\infty$, then $\Psi_\infty(\rho)$ contains the ideal of compact operators on $\Hc$ 
and is dense in the norm-closed subspace $\{Y\in\Bc(\Hc)\mid \rho^{(\infty)}(\cdot)Y\in\Cc(G,\Bc(\Hc))\}$ 
of $\Bc(\Hc)$. 
\end{enumerate}
\end{corollary}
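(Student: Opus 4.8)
The plan is to read off the whole statement from Theorem~\ref{main} and Corollary~\ref{C1}, applied to the strongly continuous and uniformly bounded (indeed isometric) representation $\pi:=\rho^{(p)}$ on $\Sg_p(\Hc)$, whose contragredient is $\pi^*=\rho^{(q)}$ on $\Sg_p(\Hc)^*\simeq\Sg_q(\Hc)$, where $p$ is the exponent conjugate to $q$ in the sense of the duality recalled above (so that $p=0$ when $q=1$). By definition $\Psi_q(\rho)$ is the space $\Yc_{\pi^*}^\infty=\bigcap_{k\ge0}\Yc_{\pi^*}^k$ of smooth vectors of $\pi^*$, and since the subspaces $\Cc^k(\pi^*)$ are nested, the two inclusions of Theorem~\ref{main} collapse, upon intersecting over all $k\ge1$, to
$$\Psi_q(\rho)=\bigcap_{k\ge1}\Cc^k(\pi^*)=\bigcap_{k\ge1}\ \bigcap_{1\le j_1,\dots,j_k\le m}\Dc(\de\pi(X_{j_1})^*\cdots\de\pi(X_{j_k})^*).$$
Everything then reduces to computing the operators $\de\pi(X)^*$ on $\Sg_q(\Hc)$.

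First I would identify the generators of $\pi$: differentiating $\pi(\exp_G(tX))V=\rho(\exp_G(tX))V\rho(\exp_G(tX))^{-1}$ at $t=0$ gives $\de\pi(X)V=[\de\rho(X),V]=\ad(\de\rho(X))V$ on the differentiable vectors of $\pi$. Passing to the Banach-space adjoint on $\Sg_q(\Hc)$, using the pairing $\langle Y,V\rangle=\Tr(YV)$ and the cyclicity of the trace, one obtains for $Y\in\Dc(\de\pi(X)^*)$ and every differentiable vector $V$ of $\pi$ that
$$\langle\de\pi(X)^*Y,V\rangle=\langle Y,[\de\rho(X),V]\rangle=-\Tr([\de\rho(X),Y]V),$$
so that $\de\pi(X)^*Y=-[\de\rho(X),Y]$, the domain $\Dc(\de\pi(X)^*)$ consisting exactly of those $Y\in\Sg_q(\Hc)$ for which the a priori only densely defined commutator $[\de\rho(X),Y]$ extends to an operator belonging to $\Sg_q(\Hc)$. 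Iterating this identity yields
$$\de\pi(X_{j_1})^*\cdots\de\pi(X_{j_k})^*Y=(-1)^k[\de\rho(X_{j_1}),[\dots,[\de\rho(X_{j_k}),Y]\dots]],$$
with $Y$ lying in the corresponding domain precisely when all these iterated commutators belong to $\Sg_q(\Hc)$. Combined with the description of $\Psi_q(\rho)$ above, this proves assertion~(\ref{C2_item1}).

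For assertion~(\ref{C2_item2}) I would distinguish the two ranges of $q$. If $1\le q<\infty$, then $\pi^*=\rho^{(q)}$ is itself strongly continuous, so $\Yc_{\pi^*_0}=\Sg_q(\Hc)$, and Corollary~\ref{C1} gives directly that $\Psi_q(\rho)=\bigcap_{k\ge1}\Cc^k(\pi^*)$ is dense in $\Sg_q(\Hc)$. If $q=\infty$, then $\pi^*=\rho^{(\infty)}$ may be discontinuous, but $\Yc_{\pi^*_0}$ is exactly the norm-closed subspace $\{Y\in\Bc(\Hc)\mid\rho^{(\infty)}(\cdot)Y\in\Cc(G,\Bc(\Hc))\}$ appearing in the statement, and Corollary~\ref{C1} again yields the density of $\Psi_\infty(\rho)$ there; finally, since the conjugation representation $\rho^{(0)}$ on the compact operators is strongly continuous, every compact operator has norm-continuous orbit, whence $\Sg_0(\Hc)$ is contained in this subspace.

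The main obstacle is the second paragraph. The bare identity $\de\pi(X)^*Y=-[\de\rho(X),Y]$ is routine, but the precise identification of the domains of the composed operators $\de\pi(X_{j_1})^*\cdots\de\pi(X_{j_k})^*$ requires care, since $\de\rho(X)$ is unbounded and the commutators are defined only in a weak (form) sense on $\Dc(\de\rho(X))$; one must verify that composing adjoints corresponds correctly to iterating weak commutators, with no loss of domain at each stage. This is exactly the bookkeeping performed, in the scalar model, in the proof of Theorem~\ref{main} via \cite[Prop. 5.1.2(b)]{ABG96}, and I would reduce the general inductive step to that framework.
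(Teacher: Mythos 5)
Your argument follows the paper's own proof exactly: the paper simply records the identification $\Cc^1(\rho^{(q)})=\{Y\in\Sg_q(\Hc)\mid [\de\rho(X_j),Y]\in\Sg_q(\Hc),\ j=1,\dots,m\}$ and declares both assertions special cases of Theorem~\ref{main} and Corollary~\ref{C1}, while you additionally supply the trace-duality computation of $\de\pi(X)^*$ and its iteration, which the paper leaves implicit. One small remark: for $q=\infty$ what your argument (and the paper's machinery) actually delivers is that the compact operators lie in $\Yc_{\pi^*_0}$, hence in the norm closure of $\Psi_\infty(\rho)$, rather than in $\Psi_\infty(\rho)$ itself --- and this is the only reading under which that clause of the statement holds, since a rank-one operator built from a non-smooth vector of $\rho$ is compact but not a smooth vector of $\rho^{(\infty)}$.
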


\begin{proof}
We have that 
$$ \Cc^1(\rho^{(q)})= \{ Y \in \Sg_q(\Hc) \mid [\de \rho(X_j), Y] \in \Sg_q(\Hc)\, \text{for}\, j=1, \dots, m\}.
$$
Then both assertions are special cases of Theorem~\ref{main} and Corollary~\ref{C1}. 
\end{proof}

We can now prove a corollary which shows that the first two conditions in \cite[Th. 1]{Me00} are equivalent 
irrespective of the unitary representation involved therein. 
This also shows that the $\Ci$ part of the 
relation between differentiability and existence of commutators
suggested after \cite[Eq. (8.4)]{Co95}  
holds true although the $\Cc^1$ part of that suggestion fails to be true, 
since the following corollary would be false with the class $\Ci$ 
replaced by $\Cc^k$ for any $k<\infty$. 
In fact, recall from the proof of Theorem~\ref{main} that the corresponding inclusions are strict 
in a special instance of the present setting, which is precisely the special instance referred to in \cite{Co95}. 

\begin{corollary}\label{C3}
If $Y\in\Bc(\Hc)$ then the above mapping $\rho^{(\infty)}(\cdot)Y\colon G\to\Bc(\Hc)$ 
is of class~$\Ci$ with respect to the norm operator topology on $\Bc(\Hc)$ 
if and only if it is~$\Ci$ with respect to the strong operator topology. 
\end{corollary}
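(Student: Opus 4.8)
The plan is to deduce both implications from the commutator description obtained above. The implication ``norm $\Ci$ $\Rightarrow$ strongly $\Ci$'' is immediate, since the identity map from $\Bc(\Hc)$ with its norm topology to $\Bc(\Hc)$ with the strong operator topology is linear and continuous, so that a map which is smooth for the norm topology is smooth for the strong topology, with the same derivatives. Write $\pi:=\rho^{(1)}$, so that $\pi^*=\rho^{(\infty)}$ is the contragredient of the strongly continuous and contractive representation $\pi$, and both Theorem~\ref{main} and Corollary~\ref{C2} apply. By Corollary~\ref{C2}\eqref{C2_item1} with $q=\infty$, the map $\rho^{(\infty)}(\cdot)Y$ is norm $\Ci$ if and only if every iterated commutator $[\de\rho(X_{j_1}),[\dots,[\de\rho(X_{j_k}),Y]\dots]]$ is a bounded operator. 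Hence it remains only to show that strong $\Ci$ smoothness forces all of these commutators to be bounded.

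First I would analyze a single strong derivative. Fix $X\in\gg$ and assume that the limit $B_X\xi:=\lim_{t\to0}t^{-1}(\rho(\exp_G(tX))Y\rho(\exp_G(tX))^{-1}-Y)\xi$ exists in $\Hc$ for every $\xi\in\Hc$, as is the case when $\rho^{(\infty)}(\cdot)Y$ is strongly differentiable at $\1$. The crucial point is that $B_X$ is then automatically a \emph{bounded} operator: for any sequence $t_n\to0$ the operators $Q_n:=t_n^{-1}(\rho(\exp_G(t_nX))Y\rho(\exp_G(t_nX))^{-1}-Y)$ converge pointwise, hence are pointwise bounded, so the uniform boundedness principle gives $\sup_n\Vert Q_n\Vert<\infty$, whence $B_X\in\Bc(\Hc)$. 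Expanding $Q_n\xi$ on a smooth vector $\xi$ of $\rho$ and using the strong continuity of $\rho$ then shows that $Y\xi$ lies in the domain of $\de\rho(X)$ and that $B_X\xi=[\de\rho(X),Y]\xi$; since the smooth vectors are dense and $B_X$ is bounded, $B_X$ is precisely the bounded extension of the commutator $[\de\rho(X),Y]$. Thus strong differentiability along $X$ already forces $[\de\rho(X),Y]\in\Bc(\Hc)$.

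The passage to all orders uses the representation property. Since $\rho^{(\infty)}(g\exp_G(tX))Y=\rho^{(\infty)}(g)(\rho^{(\infty)}(\exp_G(tX))Y)$, the strong directional derivative of $\rho^{(\infty)}(\cdot)Y$ along $X$ is exactly the map $\rho^{(\infty)}(\cdot)(B_XY)$ attached to the \emph{bounded} operator $B_XY=[\de\rho(X),Y]$. Consequently, if $\rho^{(\infty)}(\cdot)Y$ is strongly $\Ci$, then so is $\rho^{(\infty)}(\cdot)(B_XY)$, and an induction on $k$ shows that every iterated commutator of order $k$ is a well-defined bounded operator. By Corollary~\ref{C2}\eqref{C2_item1} this is exactly the condition for $\rho^{(\infty)}(\cdot)Y$ to be norm $\Ci$, which completes the proof.

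The main obstacle is the mismatch of orders. The uniform boundedness argument shows only that strong $\Cc^k$ smoothness yields bounded iterated commutators of order $k$, that is, $Y\in\Cc^k(\pi^*)$, and by Theorem~\ref{main} this gives merely norm $\Cc^{k-1}$ smoothness; the example in the proof of Theorem~\ref{main} shows that this loss of one derivative is genuine. It is precisely the invariance of the whole scale under a shift of the order by one that makes the equivalence survive for the class $\Ci$ while failing for every finite $\Cc^k$, in accordance with the discussion preceding the statement.
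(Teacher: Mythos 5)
Your proof is correct, and its overall strategy coincides with the paper's: both arguments funnel everything through the iterated commutator criterion of Corollary~\ref{C2}\eqref{C2_item1}. The difference is in how the bridge between strong smoothness and bounded commutators is built. The paper simply reduces to a neighborhood of $\1$ and invokes the argument of \cite[Prop. 5.1.2(b)]{ABG96} to assert the full equivalence ``strongly $\Ci$ $\Leftrightarrow$ all iterated commutators bounded,'' whereas you prove only the implication ``strongly $\Ci$ $\Rightarrow$ all iterated commutators bounded'' by a self-contained uniform boundedness argument (pointwise convergence of the difference quotients $Q_t$ forces $\sup\Vert Q_{t_n}\Vert<\infty$, hence the strong derivative $B_X$ is bounded and is identified with $[\de\rho(X),Y]$ on smooth vectors), and you recover the missing implication for free from the trivial comparison of topologies, closing the cycle norm $\Ci$ $\Rightarrow$ strong $\Ci$ $\Rightarrow$ bounded commutators $\Rightarrow$ norm $\Ci$. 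This buys a proof that is independent of the external reference and economizes on what must be verified about strong smoothness; your closing observation that the one-derivative loss in Theorem~\ref{main} is absorbed by the $\Ci$ scale, while killing any finite-order analogue, accurately reflects the discussion the authors place before the statement.
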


\begin{proof}
The mapping $\rho^{(\infty)}(\cdot) Y\colon G\to\Bc(\Hc)$ is smooth with respect to any topology on $\Bc(\Hc)$ 
if and only if it is smooth on any neighborhood of $\1\in G$. 
On the other hand, just as in the proof of \cite[Prop. 5.1.2(b)]{ABG96}, 
one can see that this mapping is smooth with respect to the strong operator topology on $\Bc(\Hc)$ 
if and only if the iterated commutator condition in Corollary~\ref{C2}\eqref{C2_item1} is satisfied, 
hence the conclusion follows by Corollary~\ref{C2}\eqref{C2_item1}, where the smoothness 
of $\rho^{(\infty)}(\cdot)Y$ is understood with respect to the norm operator topology on $\Sg_\infty(\Hc)=\Bc(\Hc)$.
\end{proof}

\begin{example}\label{E1}
\normalfont
Let $G=\HH_{2n+1}$ be the $(2n+1)$-dimensional Heisenberg group 
with the Schr\"odinger representation $\rho\colon G\to\Bc(\Hc)$. 
As recalled in \cite{No12} for $1\le p\le\infty$, 
the set $\Psi_p(\rho)$ of the above Corollary~\ref{C2} 
is precisely the set of pseudo-differential operators on $L^2(\RR^n)$ 
corresponding to the space of symbols 
$$\{a\in\Ci(\RR^{2n})\mid(\forall\alpha\in\NN^{2n})\ \partial^\alpha a\in L^p(\RR^{2n})\}$$ 
(see also \cite{BB12} for similar results on more general nilpotent Lie groups). 
Thus our Corollary~\ref{C2} leads to the main results of \cite{No12}. 
\end{example} 

\begin{example}
\normalfont
The above Corollary~\ref{C2} also provides additional information 
in  the setting of pseudo-differential operators on a compact manifold acted on by a Lie group, 
as studied for instance in \cite{Ta97} and \cite{Me00}. 
Thus, it follows that the notions of $U$-smoothness and $\Ag$-smoothness 
from \cite[Sect. 2]{Ta97} actually coincide. 
\end{example}

\begin{remark}
\normalfont
It would be interesting to extend the above result of Ex.~\ref{E1}
to the setting of 
the magnetic Weyl calculus of \cite{IMP10}.  
Such an extension is likely to require infinite-dimensional Lie groups. 
\end{remark}

\textbf{Acknowledgment.}  
This research has been partially supported by the Grant
of the Romanian National Authority for Scientific Research, CNCS-UEFISCDI,
project number PN-II-ID-PCE-2011-3-0131.

%\end{document}

%%%%%%%%%%%%%%%%%%%%%%%%%%%%%%
%%%%%%%%%%%%%%%%%%%%%%

\bigskip

\end{document}